\documentclass[12pt]{article}

\usepackage{amsmath, epsfig, cite}
\usepackage{amssymb}
\usepackage{amsfonts}
\usepackage{latexsym}
\usepackage{float}
\usepackage{color}
\usepackage{booktabs}
\usepackage{graphicx}
\usepackage{caption}
\usepackage[colorlinks=true,linkcolor=blue,citecolor=blue,urlcolor=blue]{hyperref}

\newtheorem{thm}{Theorem}

\newtheorem{lem}{Lemma}[section]

\newcommand{\pf}{\noindent{\it Proof.} }

\newcommand{\NN}{\mathbb{Z}_{\geq 0}}
\newcommand{\NNq}{\mathbb{Z}_{\geq 0}[q]}
\newcommand{\qbinom}[2]{\genfrac{[}{]}{0pt}{}{#1}{#2}}
\newcommand{\qpoch}[1]{(q;q)_{#1}}
\newcommand{\bc}[1]{\binom{#1}{2}}

\numberwithin{equation}{section}

\newcommand{\qed}{\hfill$\square$\medskip}

\begin{document}

\begin{center}
{\Large\bf Proofs of Two Positivity Conjectures of Guo}
\end{center}

\vskip 2mm \centerline{Ji-Cai Liu}
\begin{center}
{\footnotesize Department of Mathematics, Wenzhou University, Wenzhou 325035, PR China\\[5pt]
{\tt jcliu2016@gmail.com} \\[10pt]
}
\end{center}

\vskip 0.7cm \noindent{\bf Abstract.}
We prove two positivity conjectures proposed by Guo for alternating sums and factorial ratios built from Gaussian coefficients.  The first result proves the positivity of the odd $q$-super Catalan numbers
\[
   C_{m,n}(q)=\frac{[2m+1]![2n]!}{[m+n+1]![m]![n]!}.
\]
The proof uses the positivity theorem of Warnaar and Zudilin for the usual $q$-super Catalan numbers, together with two recurrences obtained from a double application of the $q$-Chu--Vandermonde summation.  The second result proves Guo's conjectural strengthening of his alternating-sum positivity theorem, replacing the exponent coefficient $2r-1$ by every odd coefficient $2b-1$, $1\leq b\leq r$.  Its proof combines a $q\mapsto q^{-1}$ reciprocity with a finite deletion recurrence.

\vskip 3mm \noindent {\it Keywords}: $q$-binomial coefficients; $q$-super Catalan numbers; positivity; alternating sums; $q$-Chu--Vandermonde summation

\vskip 2mm
\noindent{\it MR Subject Classifications}: 05A10, 05A30, 33D15

\section{Introduction}

For a non-negative integer $n$ define
\[
   [n]=\frac{1-q^n}{1-q},\qquad [n]!=[1][2]\cdots[n],\qquad [0]!=1.
\]
For integers $N,K$ we use the Gaussian coefficient
\begin{equation}\label{eq:qbinom-def}
   \qbinom{N}{K}=\begin{cases}
   \dfrac{[N]!}{[K]![N-K]!},&0\leq K\leq N,\\[6pt]
   0,&\hbox{otherwise.}
   \end{cases}
\end{equation}
We write $P(q)\in\NNq$ if $P(q)$ is a polynomial in $q$ with non-negative integer coefficients.  We also put
\[
   \qpoch{n}=(1-q)(1-q^2)\cdots(1-q^n),\qquad \qpoch{0}=1,
\]
and use the standard convention that $1/\qpoch{n}=0$ if $n<0$.

The positivity of Gaussian coefficients is a basic model for positivity questions involving ratios of $q$-factorials.  Warnaar and Zudilin~\cite{WarnaarZudilin} formulated a broad conjecture for positive $q$-factorial ratios
\[
   D(\mathbf a,\mathbf b;q)=\frac{[a_1]!\cdots [a_r]!}{[b_1]!\cdots [b_s]!}
\]
under the usual Landau-type step-function condition.  They proved, among other examples, the positivity of the $q$-super Catalan numbers
\begin{equation}\label{eq:A-def}
   A_{m,n}(q)=\frac{[2m]![2n]!}{[m+n]![m]![n]!},\qquad m,n\geq 0,
\end{equation}
and of the related ratios
\[
   B_{n,m}(q)=\frac{[2n]![m]!}{[n]![2m]![n-m]!},\qquad n\geq m\geq 0.
\]
They also emphasized that positivity of alternating sums of products of Gaussian coefficients is usually delicate.

Guo~\cite{Guo} studied factors of alternating sums of products of $q$-binomial coefficients.  Motivated by Warnaar and Zudilin's theorem on \eqref{eq:A-def} and by integrality results of Xia~\cite{Xia}, Guo introduced the odd analogue
\begin{equation}\label{eq:C-def}
   C_{m,n}(q)=\frac{[2m+1]![2n]!}{[m+n+1]![m]![n]!}
\end{equation}
and conjectured that $C_{m,n}(q)$ is positive.  Guo noted that this positivity is not, in general, a direct consequence of the Warnaar--Zudilin factorial-ratio conjecture, because the tuples $(2m+1,2n)$ and $(m,n,m+n+1)$ need not satisfy the relevant Landau condition.

Guo also proved a positivity theorem for a family of alternating sums.  Let $m_1,\ldots,m_r$ and $n_1,\ldots,n_s$ be non-negative integers, with cyclic conventions
\[
   m_{r+1}=m_1,\qquad n_{s+1}=n_1.
\]
For $1\leq b\leq r$ and $0\leq a\leq s$, define
\begin{align}\label{eq:F-def}
F_{r,s}^{(b)}(\mathbf m;\mathbf n;a;q)
&=\frac{[m_1]![n_1]![m_r+n_s+1]!}{[m_1+m_r+1]![n_1+n_s]!}
\sum_{k=-n_1}^{n_1}(-1)^k q^{a k^2+(2b-1)\bc{k}}        \\
&\quad\times
\prod_{i=1}^{r}\qbinom{m_i+m_{i+1}+1}{m_i+k}
\prod_{j=1}^{s}\qbinom{n_j+n_{j+1}}{n_j+k} .\nonumber
\end{align}
Guo's theorem \cite[Theorem 1.3]{Guo} is the case $b=r$ of the assertion
\begin{equation}\label{eq:F-positive}
   F_{r,s}^{(b)}(\mathbf m;\mathbf n;a;q)\in\NNq.
\end{equation}
He conjectured that \eqref{eq:F-positive} remains true for all $1\leq b\leq r$.

The purpose of this paper is to prove these two conjectures \cite[Conjectures 5.1 and 7.1]{Guo}.  Our two main results are the following.

\begin{thm}\label{thm:odd-Catalan}
For all $m,n\in\NN$, the odd $q$-super Catalan number $C_{m,n}(q)$ belongs to $\NNq$.
\end{thm}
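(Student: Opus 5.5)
The plan is to reduce the positivity of $C_{m,n}(q)$ to the Warnaar--Zudilin theorem for $A_{m,n}(q)$ in \eqref{eq:A-def}, through subtraction-free recurrences proved by a simultaneous induction. Alongside $C_{m,n}$ I would carry its companion
\[
   D_{m,n}(q)=\frac{[2m]![2n+1]!}{[m+n+1]![m]![n]!}=C_{n,m}(q).
\]
Write $E_{m,n}=[2m]![2n]!/([m+n+1]![m]![n]!)$ for the common factor. Then $C_{m,n}=E_{m,n}[2m+1]$, $D_{m,n}=E_{m,n}[2n+1]$ and $A_{m,n}=E_{m,n}[m+n+1]$. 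Two elementary identities follow at once:
\[
   C_{m,n}+q^{2m+1}D_{m,n}=(1+q^{m+n+1})A_{m,n},\qquad (1+q^{m+1})\,C_{m,n}=A_{m+1,n}.
\]
Neither is subtraction-free for $C_{m,n}$ itself. They do show that $C_{m,n}$ and $D_{m,n}$ are polynomials, because $A_{m+1,n}$ is a polynomial and $\Phi_d\mid 1+q^{m+1}$ forces $\Phi_d$ to divide the cyclotomic factorization of $A_{m+1,n}$ appropriately. A direct cyclotomic count of $[2m+1]![2n]!/([m+n+1]![m]![n]!)$ also gives polynomiality. So the real task is non-negativity.

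Next I would obtain a genuinely positive decomposition. I would start from the standard $q$-Chu--Vandermonde expression
\[
   \qbinom{2m+1}{m}=\sum_k q^{k^2}\qbinom{m}{k}\qbinom{m+1}{m-k},
\]
and apply Chu--Vandermonde a second time to the factor $[2n]!/([m+n+1]![n]!)$ paired with $[m]!$. The aim is two recurrences of the shape
\[
   C_{m,n}=C_{m,n-1}\cdot(\text{unit-free positive factor})+q^{\alpha}A_{m,n-1}\cdot(\dots),\qquad
   C_{m,n}=q^{\beta}C_{m-1,n+1}+(\text{positive combination of }A_{m',n'}),
\]
in which every coefficient is a $q$-power or a Gaussian coefficient. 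Concretely, I expect $C_{m,n}-q^{\beta}C_{m-1,n}$ (or $C_{m,n}-q^{\beta}D_{m-1,n}$) to equal $E$-type terms times $[m+n+1]$, that is, a multiple of some $A_{m',n'}$. The exponent $\beta$ should be read off by comparing the constant and leading terms, using that $C_{m,n}$ is palindromic of degree $m^2+n^2+m-n$ (or the corresponding value). Once such an identity is found, verifying it is routine: divide through by the common $q$-factorial ratio and check a linear identity among $q$-integers, such as $[a+b]=[a]+q^a[b]$.

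With these recurrences in hand, the induction runs on $m$ (or on $m+n$). The base case $C_{0,n}=[2n]!/([n+1]![n]!)$ is the $q$-Catalan number $\frac{1}{[n+1]}\qbinom{2n}{n}$, which is known to lie in $\NNq$ and could also be settled by the $D$-version of the recurrence. The inductive step then combines non-negativity of lower $C$'s and $D$'s, the Warnaar--Zudilin positivity of $A_{m',n'}$, and positivity of Gaussian coefficients.

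The main obstacle is the second step: choosing the correct pair of recurrences so that no minus sign survives. Naive manipulations, such as dividing by $1+q^{m+1}$ or subtracting $q^{2m+1}D_{m,n}$, immediately reintroduce signs. I would first try to match, term by term, the Chu--Vandermonde expansions of $A_{m+1,n}$ and of $C_{m,n}$, hunting for a telescoping split. If that fails, I would fall back on proving the two-term recurrence linking $C_{m,n}$, $D_{m-1,n}$ and $A_{m,n-1}$ by computer-guided guessing of the exponent and then verifying it by the $q$-integer identity above.
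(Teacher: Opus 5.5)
\textbf{There is a genuine gap.} The step that carries all the content, an explicit subtraction-free recurrence for $C_{m,n}$, is never produced.

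The identities you do prove, $C_{m,n}+q^{2m+1}D_{m,n}=(1+q^{m+n+1})A_{m,n}$ and $(1+q^{m+1})C_{m,n}=A_{m+1,n}$, are correct. As you say yourself, though, they involve a subtraction or a division and say nothing about coefficients. After that, the recurrences are described only by their hoped-for shape.

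The one concrete form you commit to cannot work in general. At $q=1$ one has $C_{m,n}(1)/C_{m-1,n}(1)=2(2m+1)/(m+n+1)$ and $C_{m,n}(1)/D_{m-1,n}(1)=2(2m+1)(2m-1)/\bigl((m+n+1)(2n+1)\bigr)$. Both ratios are below $1$ once $n$ is large compared with $m$. For instance $C_{1,5}(1)=36$, while $C_{0,5}(1)=42$ and $D_{0,5}(1)=462$. Hence $C_{m,n}-q^{\beta}C_{m-1,n}$ and $C_{m,n}-q^{\beta}D_{m-1,n}$ are negative at $q=1$ for such $(m,n)$, whatever $\beta$ is. So neither can be a non-negative combination of the $A_{m',n'}$. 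Peeling off one neighbouring term with a monomial coefficient is impossible when one index dominates, so guessing a two-term identity of that kind will not close the induction.

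\textbf{The missing idea} is the one the paper uses: Guo's double $q$-Chu--Vandermonde expansion (Lemma~\ref{lem:double-expansion}). It writes $\qbinom{2N+2h}{h-1}$ as a double sum of $q$-powers times $\qbinom{N+h}{j}\qbinom{j}{k}\qbinom{N+h-j}{h-j-k-1}$. Multiplying it by $[2N]![h]!/([N]![N+h]!)$ gives $C_{N+h,N}=q^hA_{N+h,N}+\sum_k C_{k,N}(\cdots)$. Multiplying it instead by $[2N+1]![h-1]!/([N]![N+h]!)$ gives $C_{N,N+h}=\sum_k C_{N,k}(\cdots)$. In both, $0\le k\le\lfloor(h-1)/2\rfloor$, and every weight is a sum of $q$-powers times products of two Gaussian coefficients.

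These recurrences are multi-term and jump far below $(m,n)$, which is what keeps them positive in the unbalanced regime; the second needs no $A$-term at all. Strong induction on $m+n$, starting from the diagonal $C_{n,n}=\qbinom{2n}{n}$, then finishes the proof.

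Two smaller slips. The Chu--Vandermonde expansion should read $\qbinom{2m+1}{m}=\sum_k q^{k^2+k}\qbinom{m}{k}\qbinom{m+1}{m-k}$; your exponent $k^2$ gives $1+2q$ at $m=1$. Also, $\deg C_{m,n}=m^2+n^2-mn+m-n$, not $m^2+n^2+m-n$.
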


\begin{thm}\label{thm:Guo71}
For $r,s>1$, let $m_1,\ldots,m_r$ and $n_1,\ldots,n_s$ be positive integers, with $m_{r+1}=m_1$ and $n_{s+1}=n_1$.  For all integers $a,b$ with $0\leq a\leq s$ and $1\leq b\leq r$, we have
\[
   F_{r,s}^{(b)}(\mathbf m;\mathbf n;a;q)\in\NNq.
\]
\end{thm}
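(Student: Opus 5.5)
We prove Theorem~\ref{thm:Guo71} by induction on $r-b$. The base case $b=r$ is Guo's theorem \cite[Theorem 1.3]{Guo}. The abstract points to two tools, a $q\mapsto q^{-1}$ reciprocity and a finite deletion recurrence, and the plan uses them as follows.

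\emph{Step 1 (reciprocity and symmetry).} First show that $F^{(b)}_{r,s}(\mathbf m;\mathbf n;a;q)$ is a polynomial with a palindromic structure. Replace $q$ by $q^{-1}$, using
\[
\qbinom{N}{K}_{q^{-1}}=q^{-K(N-K)}\qbinom{N}{K}.
\]
For $K=m_i+k$ and $N=m_i+m_{i+1}+1$, the exponent $K(N-K)$ equals $(m_i+k)(m_{i+1}+1-k)$. Summing over $i$ and $j$, its $k^2$-part is $-(r+s)k^2$. Its linear part telescopes cyclically to $rk$, using $\sum_i(m_{i+1}-m_i)=0$, plus a constant. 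After the substitution $k\mapsto -k$ or $k\mapsto 1-k$, which the summand structure $\qbinom{\cdot}{m_i+k}=\qbinom{\cdot}{m_{i+1}+1-k}$ makes available, this converts $F^{(b)}$ at $q^{-1}$ into $q^{-D}F^{(b')}$ at $q$ with $a'=s-a$ and a reflected odd coefficient. Because $\bc{k}$ and $k^2$ mix, the relation should be $(a,b)\leftrightarrow(s-a,\,r-b+1)$ up to a power of $q$, possibly after shifting $a$. Integrality of $F^{(b)}$ for every $b$ should also follow from Guo's factor arguments, since the prefactor is a ratio of $q$-factorials.

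\emph{Step 2 (deletion recurrence).} Lowering $b$ by one corresponds to multiplying the summand by $q^{-2\binom{k}{2}}$. I would absorb this into the product by using a $q$-Pascal relation on one factor, for instance
\[
\qbinom{m_r+m_1+1}{m_r+k}=q^{\,m_r+k}\qbinom{m_r+m_1}{m_r+k}+\qbinom{m_r+m_1}{m_r+k-1}.
\]
An alternative is to delete the index $m_r$ outright: taking $m_r$ to a boundary value collapses the cyclic product from $r$ to $r-1$ factors. The aim is a recurrence expressing $F^{(b)}_{r,s}$ as a combination with $\NNq$ coefficients (powers of $q$ times Gaussian coefficients) of terms $F^{(b)}_{r-1,s}$ and $F^{(b+1)}_{r,s}$ with smaller parameters. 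Repeating this reduces $r$ until $b=r$ or $b=r-1$. Those cases are covered by Guo's theorem or by Step 1's reciprocity, since $b$ and $r-b+1$ cannot both be far from $r$ unless the deletion is used. For $r=2$, note that $b\in\{1,2\}$ and the reflection $b\mapsto 3-b$ settles it directly.

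\emph{Main obstacle.} The main obstacle is making the deletion recurrence coefficientwise positive. The prefactor $\frac{[m_1]![n_1]![m_r+n_s+1]!}{[m_1+m_r+1]![n_1+n_s]!}$ must transform compatibly when $m_r$ or the factor is changed, and the resulting $q$-factor must be in $\NNq$ rather than a signed combination. I would first try an induction on $m_r$ (decreasing it to $0$, which removes the factor), with cases $s$ fixed and $a$ free. I would check small cases by hand, such as $r=3$, $b=1$, to pin down the correct exponents before writing the general identity.
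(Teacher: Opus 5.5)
\paragraph{Verdict.} Your proposal has a genuine gap: the deletion recurrence, which carries the whole proof, is never constructed, and the induction is arranged in a direction the available identity cannot support.

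\paragraph{The missing recurrence.} Your Step 2 is left as an aim, and the routes you suggest do not give a recurrence of the right shape.
\begin{itemize}
\item The $q$-Pascal split of $\qbinom{m_r+m_1+1}{m_r+k}$ produces a factor $q^{m_r+k}$ that is linear in $k$. It also produces factors $\qbinom{m_r+m_1}{\cdot}$ that are not of the cyclic form $\qbinom{m_i+m_{i+1}+1}{m_i+k}$. So neither piece is a sum of type $F^{(b')}_{r',s}$.
\item Setting $m_r=0$ does not collapse the cycle to length $r-1$. It only truncates the sum to $k\in\{0,1\}$, via $\qbinom{m_{r-1}+1}{m_{r-1}+k}\qbinom{m_1+1}{k}$.
\end{itemize}
The missing idea is to apply Guo's $q$-Chu--Vandermonde product identity to two \emph{adjacent} factors. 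The three factors containing $m_1,m_2$ equal
\[
\frac{\qpoch{m_2+m_3+1}\qpoch{m_r+m_1+1}}{\qpoch{m_1+m_2+1}\qpoch{m_r+m_3+1}}\,\qbinom{m_1+m_2+1}{m_1+k}\qbinom{m_1+m_2+1}{m_2+k}\,\qbinom{m_r+m_3+1}{m_r+k}.
\]
Expanding the middle product and setting $\ell=t+k-1$ gives the exponent $t(t+2k-1)=\ell^2+\ell-2\bc{k}$. This is exactly the $q^{-2\bc{k}}$ you wanted to absorb. The leftover $\qpoch{\cdot}$'s then recombine with the closing factor and the prefactor into the data for the cycle $(\ell,m_3,\ldots,m_r)$. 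For $r\ge3$ and $b\ge2$ this yields
\[
F^{(b)}_{r,s}(\mathbf m;\mathbf n;a;q)=\sum_{\ell=0}^{m_1}q^{\ell^2+\ell}\qbinom{m_1}{\ell}\qbinom{m_2+m_3+1}{m_2-\ell}\,F^{(b-1)}_{r-1,s}(\ell,m_3,\ldots,m_r;\mathbf n;a;q).
\]
Because $\ell=0$ occurs, the statement must be proved with $m_i\ge0$ allowed.

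\paragraph{The induction order.} The recurrence above lowers $r$ and $b$ together, so $r-b$ is invariant. It therefore cannot drive your induction on $r-b$ from the base $b=r$. You also give no positive identity for the direction $F^{(b)}_{r,s}\to F^{(b+1)}_{r,s}$ that you ask for.

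The scheme that works is induction on $b$, simultaneously for all $r\ge b$, with base case $b=1$. Reciprocity supplies that base, not residual cases.
\begin{itemize}
\item The map is exactly $(a,b)\mapsto(s-a,\,r-b+1)$, with no substitution in $k$ and no shift of $a$. This is because the linear and quadratic parts combine as $rk^2-rk=2r\bc{k}$.
\item Hence $b=1$ corresponds to Guo's case $b=r$.
\item Your claim that $b=r-1$ is covered by reciprocity is wrong: it maps to $b=2$.
\end{itemize}

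\paragraph{The degree bound.} Reciprocity alone only shows that $q^{\Delta}F^{(r)}_{r,s}(\mathbf m;\mathbf n;s-a;q^{-1})$ is a Laurent polynomial with non-negative coefficients. To land in $\NNq$ you need $\deg F^{(r)}_{r,s}(\mathbf m;\mathbf n;s-a;q)\le\Delta$. This holds because each summand has rational-function degree $\Delta-ak^2-\bc{k}\le\Delta$. Your appeal to ``Guo's factor arguments'' does not supply this bound.
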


Section~\ref{sec:proof-thm1} proves Theorem~\ref{thm:odd-Catalan}.  Section~\ref{sec:proof-thm2} proves Theorem~\ref{thm:Guo71}.  The last section gives some concluding remarks.

\section{Proof of Theorem~\ref{thm:odd-Catalan}}\label{sec:proof-thm1}
Guo \cite{Guo} established the following result through a double application of the $q$-Chu--Vandermonde summation (see \cite[Appendix (II.7)]{GasperRahman}):

\begin{lem}[Guo]\label{lem:double-expansion}
For $N\geq 0$ and $h\geq 1$, we have
\begin{align}\label{eq:double-expansion}
   \qbinom{2N+2h}{h-1}
   ={}&\sum_{k=0}^{\lfloor (h-1)/2\rfloor}\sum_{j=k}^{h-k-1}
      q^{k(N+k+1)+j(N+j+1)}    \\
   &\quad\times
      \qbinom{N+h}{j}\qbinom{j}{k}\qbinom{N+h-j}{h-j-k-1} .\nonumber
\end{align}
\end{lem}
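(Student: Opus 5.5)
We derive the identity from two successive applications of the $q$-Chu--Vandermonde summation in the form
\[
   \qbinom{A+B}{n}=\sum_{j}q^{(A-j)(n-j)}\qbinom{A}{j}\qbinom{B}{n-j},
\]
together with its reversed variant obtained by swapping the roles of $A$ and $B$. The first step splits $2N+2h=(N+h)+(N+h)$:
\[
   \qbinom{2N+2h}{h-1}=\sum_{i=0}^{h-1} q^{(N+h-i)(h-1-i)}\qbinom{N+h}{i}\qbinom{N+h}{h-1-i}.
\]
The summand is symmetric under $i\leftrightarrow h-1-i$ apart from the power of $q$. Rather than fold the sum, we expand the second factor once more. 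This gives the parameters $j$ and $k$ with $\qbinom{j}{k}$ and $\qbinom{N+h-j}{h-j-k-1}$.

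Concretely, we apply Chu--Vandermonde to $\qbinom{N+h}{h-1-i}$ using the decomposition $N+h=j+(N+h-j)$, where $j$ is the summation index chosen so that the bottom splits as $(h-1-i)=k+(h-1-i-k)$. For this to yield exactly the product $\qbinom{N+h}{j}\qbinom{j}{k}\qbinom{N+h-j}{h-j-k-1}$, the natural route is the reverse one. Start from the right-hand side, fix $j$, and sum over $k$ using
\[
   \sum_k q^{k(N+k+1)}\qbinom{j}{k}\qbinom{N+h-j}{h-j-k-1}.
\]
After substituting $k\mapsto$ the appropriate index, this is a Chu--Vandermonde sum with top parameters $j$ and $N+h-j$. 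It evaluates to a single Gaussian coefficient, possibly after using $\qbinom{j}{k}=\qbinom{j}{j-k}$ to match the quadratic exponent. The result should be $q^{(\cdot)}\qbinom{N+h}{h-1-j}$, or a shifted version of it. The outer sum over $j$, weighted by $q^{j(N+j+1)}\qbinom{N+h}{j}$, is then a second Chu--Vandermonde sum with total top $2N+2h$, which collapses to $\qbinom{2N+2h}{h-1}$.

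The main obstacle is the range restriction $k\le j\le h-k-1$, with $k\le\lfloor(h-1)/2\rfloor$. As written, the inner sum over $k$ for fixed $j$ runs only over $k\le\min(j,h-1-j)$, which the Gaussian coefficients already enforce: $\qbinom{j}{k}$ vanishes for $k>j$ and $\qbinom{N+h-j}{h-j-k-1}$ vanishes for $k>h-j-1$. So interchanging the order of summation, to sum over all $j$ from $0$ to $h-1$ and then over $k$, loses nothing, and the triangular region in the statement is exactly the support. The delicate part is matching the exponents $k(N+k+1)+j(N+j+1)$ with the $q$-powers produced by the two Chu--Vandermonde evaluations. I would first verify the small cases $h=1,2$ and then fix the precise form of each Chu--Vandermonde evaluation (whether to use the $q$ or the $q^{-1}$ version) so that the quadratic exponents agree. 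Completing the square in $k(N+k+1)$ and absorbing linear terms into $(A-j)(n-j)$ should settle this.
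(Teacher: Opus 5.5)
Your proposal is correct and is the same double $q$-Chu--Vandermonde argument the paper attributes to Guo. You interchange the sums over the support $0\le j\le h-1$, $0\le k\le\min(j,h-1-j)$, which you identified correctly, and then the inner $k$-sum and the outer $j$-sum each collapse. The exponent matching you left open works directly with your reversed variant $\qbinom{A+B}{n}=\sum_k q^{k(B-n+k)}\qbinom{A}{k}\qbinom{B}{n-k}$, since $B-n=N+1$ in both applications. For $(A,B,n)=(j,N+h-j,h-1-j)$ this gives exactly $\qbinom{N+h}{h-1-j}$ with no extra power of $q$, and for $(A,B,n)=(N+h,N+h,h-1)$ it gives $\qbinom{2N+2h}{h-1}$; no symmetry trick or $q^{-1}$ version is needed.
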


Multiplying \eqref{eq:double-expansion} by $[2N]![h]!/([N]![N+h]!)$, Guo \cite{Guo} obtained the following recurrence for $C_{N+h,N}(q)$.
\begin{lem}[Guo]\label{lem:forward-recurrence}
For $N\geq0$ and $h\geq1$, we have
\begin{align}\label{eq:forward-recurrence}
   C_{N+h,N}(q)
   ={}& q^h A_{N+h,N}(q)  \\
   &+\sum_{k=0}^{\lfloor (h-1)/2\rfloor} C_{k,N}(q)
      \sum_{j=k}^{h-k-1}
      q^{k(N+k+1)+j(N+j+1)}
      \qbinom{h}{2k+1}\qbinom{h-2k-1}{j-k}.\nonumber
\end{align}
\end{lem}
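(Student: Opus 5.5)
The proof is a direct verification: multiply the identity \eqref{eq:double-expansion} of Lemma~\ref{lem:double-expansion} by the factor $M=[2N]![h]!/([N]![N+h]!)$ and simplify each side. No positivity is involved, and everything takes place in the field of rational functions of $q$. I will check two things: that $M$ times the left-hand side equals $C_{N+h,N}(q)-q^hA_{N+h,N}(q)$, and that $M$ times each summand equals the corresponding summand of \eqref{eq:forward-recurrence}.

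\textbf{Left-hand side.} From \eqref{eq:C-def} and \eqref{eq:A-def}, both $C_{N+h,N}(q)$ and $A_{N+h,N}(q)$ are multiples of the common quantity
\[
X=\frac{[2N+2h]!\,[2N]!}{[2N+h+1]!\,[N+h]!\,[N]!}.
\]
Precisely,
\[
C_{N+h,N}(q)=X\,[2N+2h+1],\qquad A_{N+h,N}(q)=X\,[2N+h+1].
\]
The elementary identity $[2N+2h+1]-q^h[2N+h+1]=[h]$ then gives
\[
C_{N+h,N}(q)-q^hA_{N+h,N}(q)=[h]\,X.
\]
On the other side, the definition \eqref{eq:qbinom-def} gives
\[
M\qbinom{2N+2h}{h-1}=\frac{[2N]!\,[h]!}{[N]!\,[N+h]!}\cdot\frac{[2N+2h]!}{[h-1]!\,[2N+h+1]!}.
\]
Since $[h]!/[h-1]!=[h]$, this is exactly $[h]X$, so the left-hand sides agree.

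\textbf{Summands.} For $0\le k\le j\le h-k-1$, write out the triple product of Gaussian coefficients. Because $(N+h-j)-(h-j-k-1)=N+k+1$, the factorials $[j]!$ and $[N+h-j]!$ cancel, and
\[
\qbinom{N+h}{j}\qbinom{j}{k}\qbinom{N+h-j}{h-j-k-1}=\frac{[N+h]!}{[k]!\,[j-k]!\,[h-j-k-1]!\,[N+k+1]!}.
\]
Multiplying by $M$ gives
\[
\frac{[2N]!\,[h]!}{[N]!\,[k]!\,[j-k]!\,[h-j-k-1]!\,[N+k+1]!}.
\]
Now expand the target summand:
\[
C_{k,N}(q)\qbinom{h}{2k+1}\qbinom{h-2k-1}{j-k}
=\frac{[2k+1]!\,[2N]!}{[N+k+1]!\,[k]!\,[N]!}\cdot\frac{[h]!}{[2k+1]!\,[j-k]!\,[h-j-k-1]!}.
\]
After the factor $[2k+1]!$ cancels, this is the same expression. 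The power $q^{k(N+k+1)+j(N+j+1)}$ is untouched, and the summation ranges coincide. Summing over $k$ and $j$ therefore yields \eqref{eq:forward-recurrence}.

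The only point needing care is the boundary of the index ranges, where a Gaussian coefficient might be zero by the convention in \eqref{eq:qbinom-def}. Throughout the given ranges every lower index lies between $0$ and the corresponding upper index, because $h-j-k-1\ge0$, $j-k\ge0$ and $2k+1\le h$. So the factorial expressions are valid term by term, and the main obstacle is just the bookkeeping of the left-hand-side combination, which the identity $[2N+2h+1]-q^h[2N+h+1]=[h]$ settles.
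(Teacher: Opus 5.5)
Your proof is correct and follows the same route as the paper: multiply \eqref{eq:double-expansion} by $[2N]![h]!/([N]![N+h]!)$ and cancel term by term. You also supply the left-hand-side step the paper leaves implicit, namely that the identity $[2N+2h+1]-q^h[2N+h+1]=[h]$ turns the left side into $C_{N+h,N}(q)-q^hA_{N+h,N}(q)$.
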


We also need the other recurrence for $C_{N,N+h}(q)$.
\begin{lem}\label{lem:backward-recurrence}
For $N\geq0$ and $h\geq1$, we have
\begin{align}\label{eq:backward-recurrence}
   C_{N,N+h}(q)
   ={}&\sum_{k=0}^{\lfloor (h-1)/2\rfloor} C_{N,k}(q)
      \sum_{j=k}^{h-k-1}
      q^{k(N+k+1)+j(N+j+1)}   \\
   &\quad\times
      \qbinom{h-1}{2k}\qbinom{h-2k-1}{j-k}.\nonumber
\end{align}
\end{lem}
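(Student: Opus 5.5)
The plan is to derive the recurrence \eqref{eq:backward-recurrence} from Lemma~\ref{lem:double-expansion} in the same way Guo derived Lemma~\ref{lem:forward-recurrence}, but with a different multiplier. We keep the same $N$ and $h$, and multiply both sides of \eqref{eq:double-expansion} by
\[
   M=\frac{[2N+1]!\,[h-1]!}{[N]!\,[N+h]!}.
\]

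\emph{Left-hand side.} Since $\qbinom{2N+2h}{h-1}=[2N+2h]!/([h-1]!\,[2N+h+1]!)$, multiplying by $M$ gives
\[
   \frac{[2N+1]!\,[2N+2h]!}{[2N+h+1]!\,[N]!\,[N+h]!}.
\]
This is exactly $C_{N,N+h}(q)$ by \eqref{eq:C-def} with $m=N$ and $n=N+h$, because $m+n+1=2N+h+1$.

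\emph{Right-hand side, term by term.} The $q$-power $q^{k(N+k+1)+j(N+j+1)}$ is unchanged. For the coefficients, fix $k,j$ in the summation range, so $0\le k\le j\le h-k-1$ and all Gaussian coefficients are given by the factorial formula. The product of the three Gaussian coefficients telescopes:
\[
   \qbinom{N+h}{j}\qbinom{j}{k}\qbinom{N+h-j}{h-j-k-1}
   =\frac{[N+h]!}{[k]!\,[j-k]!\,[h-j-k-1]!\,[N+k+1]!}.
\]
Multiplying by $M$ then yields
\[
   \frac{[2N+1]!\,[h-1]!}{[N]!\,[k]!\,[j-k]!\,[h-j-k-1]!\,[N+k+1]!}.
\]

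\emph{Matching with the claimed summand.} I then rewrite this by inserting $[2k]!/[2k]!$ and $[h-2k-1]!/[h-2k-1]!$:
\[
   \frac{[2N+1]!\,[2k]!}{[N+k+1]!\,[N]!\,[k]!}\cdot\frac{[h-1]!}{[2k]!\,[h-2k-1]!}\cdot\frac{[h-2k-1]!}{[j-k]!\,[h-j-k-1]!}.
\]
This equals $C_{N,k}(q)\qbinom{h-1}{2k}\qbinom{h-2k-1}{j-k}$. The rewriting is legitimate because $2k\le h-1$ and $0\le j-k\le h-2k-1$ in the range $0\le k\le\lfloor (h-1)/2\rfloor$, $k\le j\le h-k-1$. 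Summing over $k$ and $j$ gives \eqref{eq:backward-recurrence}.

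There is no real obstacle; the only step needing care is choosing the multiplier $M$ so that the left side becomes $C_{N,N+h}(q)$. Unlike Lemma~\ref{lem:forward-recurrence}, no extra term such as $q^hA_{N+h,N}(q)$ appears here.
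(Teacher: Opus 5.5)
Your proposal is correct and follows the paper's proof: you multiply \eqref{eq:double-expansion} by the same factor $[2N+1]![h-1]!/([N]![N+h]!)$, identify the left side as $C_{N,N+h}(q)$, and cancel factorials termwise to get $C_{N,k}(q)\qbinom{h-1}{2k}\qbinom{h-2k-1}{j-k}$. You also write out the intermediate telescoped product and check that the summation range keeps every Gaussian coefficient in its factorial regime, which the paper leaves implicit under ``direct cancellation.''
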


\pf
Multiply \eqref{eq:double-expansion} by $[2N+1]![h-1]!/([N]![N+h]!)$.  The left-hand side is
\[
   \frac{[2N+1]![2N+2h]!}{[N]![N+h]![2N+h+1]!}=C_{N,N+h}(q).
\]
For a right-hand summand, direct cancellation gives
\begin{align*}
&\frac{[2N+1]![h-1]!}{[N]![N+h]!}
 \qbinom{N+h}{j}\qbinom{j}{k}\qbinom{N+h-j}{h-j-k-1} \\
&\qquad =
\frac{[2N+1]![2k]!}{[N+k+1]![N]![k]!}
\frac{[h-1]!}{[2k]![j-k]![h-j-k-1]!} \\
&\qquad = C_{N,k}(q)\qbinom{h-1}{2k}\qbinom{h-2k-1}{j-k}.
\end{align*}
Substitution into \eqref{eq:double-expansion} proves \eqref{eq:backward-recurrence}.\qed

\noindent{\it Proof of Theorem~\ref{thm:odd-Catalan}.}
We prove the assertion by strong induction on $m+n$.  The initial case is $C_{0,0}(q)=1$.  Assume $m+n>0$ and that the theorem has been proved for all pairs with smaller sum.

If $m=n$, then
\[
   C_{n,n}(q)=\frac{[2n+1]![2n]!}{[2n+1]![n]![n]!}
             =\qbinom{2n}{n}\in\NNq.
\]

Suppose next that $m>n$, and write $m=n+h$ with $h\geq1$.  By Lemma~\ref{lem:forward-recurrence}, the polynomial $C_{m,n}(q)$ is the sum of $q^hA_{m,n}(q)$ and terms of the form
\[
   q^{k(n+k+1)+j(n+j+1)} C_{k,n}(q)
      \qbinom{h}{2k+1}\qbinom{h-2k-1}{j-k}.
\]
Warnaar and Zudilin's positivity theorem gives $A_{m,n}(q)\in\NNq$.  Moreover $k+n<m+n$ in every summand, so the induction
hypothesis gives $C_{k,n}(q)\in\NNq$.  The remaining factors are powers of $q$
and Gaussian coefficients, and hence lie in $\NNq$.

Finally suppose that $n>m$, and write $n=m+h$ with $h\geq1$.  Lemma~\ref{lem:backward-recurrence} expresses $C_{m,n}(q)$ as a finite sum of terms of the form
\[
   q^{k(m+k+1)+j(m+j+1)} C_{m,k}(q)
      \qbinom{h-1}{2k}\qbinom{h-2k-1}{j-k}.
\]
Here $m+k<m+n$, so the induction hypothesis applies to $C_{m,k}(q)$, and all remaining factors are in $\NNq$.  This completes the induction.\qed

\section{Proof of Theorem~\ref{thm:Guo71}}\label{sec:proof-thm2}

The proof uses two elementary lemmas.  The first is a reciprocity under $q\mapsto q^{-1}$; the second is a deletion recurrence for two adjacent $m$-parameters.  We shall use Guo's theorem that
\begin{equation}\label{eq:Guo-theorem-used}
   F_{r,s}^{(r)}(\mathbf m;\mathbf n;a;q)\in\NNq
   \qquad (0\leq a\leq s)
\end{equation}
for the non-negative parameters occurring in the recurrence below.

We shall also use the elementary inversion identities
\begin{equation}\label{eq:q-inversion}
   [N]!_{q^{-1}}=q^{-\bc{N}}[N]!_q,
   \qquad
   \qbinom{N}{K}_{q^{-1}}=q^{-K(N-K)}\qbinom{N}{K}_q .
\end{equation}

For cyclic vectors $\mathbf m=(m_1,\ldots,m_r)$ and $\mathbf n=(n_1,\ldots,n_s)$ set
\begin{align}\label{eq:Delta-def}
\Delta(\mathbf m;\mathbf n)
&=\bc{m_1}+\bc{n_1}+\bc{m_r+n_s+1}
  -\bc{m_1+m_r+1}-\bc{n_1+n_s}  \\
&\quad +\sum_{i=1}^{r}m_i(m_{i+1}+1)+\sum_{j=1}^{s}n_jn_{j+1}.
\nonumber
\end{align}

\begin{lem}[Reciprocity]\label{lem:reciprocity}
For $0\leq a\leq s$ and $1\leq b\leq r$,
\begin{equation}\label{eq:reciprocity}
F_{r,s}^{(b)}(\mathbf m;\mathbf n;a;q^{-1})
=q^{-\Delta(\mathbf m;\mathbf n)}
F_{r,s}^{(r-b+1)}(\mathbf m;\mathbf n;s-a;q).
\end{equation}
Moreover, if $F_{r,s}^{(r-b+1)}(\mathbf m;\mathbf n;s-a;q)$ is a polynomial, then its degree is at most $\Delta(\mathbf m;\mathbf n)$.
\end{lem}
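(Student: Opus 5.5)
The plan is to substitute $q\mapsto q^{-1}$ into the definition \eqref{eq:F-def} term by term, apply the inversion identities \eqref{eq:q-inversion}, and collect all powers of $q$. What remains is to check that the $k$-dependent part of the exponent reorganizes exactly into $(s-a)k^2+(2(r-b+1)-1)\bc{k}$, and that the $k$-independent part is $-\Delta(\mathbf m;\mathbf n)$.

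\emph{The prefactor.} It is a ratio of $q$-factorials. By the first identity in \eqref{eq:q-inversion} it acquires the factor
\[
q^{-\bc{m_1}-\bc{n_1}-\bc{m_r+n_s+1}+\bc{m_1+m_r+1}+\bc{n_1+n_s}},
\]
which accounts for the first line of \eqref{eq:Delta-def}.

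\emph{The $m$-binomials.} The factor $\qbinom{m_i+m_{i+1}+1}{m_i+k}$ acquires $q^{-(m_i+k)(m_{i+1}+1-k)}$. Its exponent expands as
\[
-(m_i+k)(m_{i+1}+1-k) = -m_i(m_{i+1}+1)-k(m_{i+1}-m_i)-k+k^2 .
\]
Summing over $i$ with the cyclic convention $m_{r+1}=m_1$, the telescoping terms $\sum_i(m_{i+1}-m_i)$ vanish. This leaves
\[
-\sum_i m_i(m_{i+1}+1)-rk+rk^2 .
\]

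\emph{The $n$-binomials.} In the same way they contribute $-\sum_j n_jn_{j+1}+sk^2$, again using that the cyclic sum of $n_{j+1}-n_j$ is zero.

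\emph{The sign weight.} The factor $q^{ak^2+(2b-1)\bc{k}}$ becomes its reciprocal.

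\emph{The exponent comparison.} Collecting terms, the $k$-dependent exponent is
\[
(r+s-a)k^2-rk-(2b-1)\bc{k}=(s-a)k^2+\bigl(2r-(2b-1)\bigr)\bc{k},
\]
since $rk^2-rk=2r\bc{k}$. Here $2r-2b+1=2(r-b+1)-1$, so this is precisely the weight defining $F^{(r-b+1)}_{r,s}(\mathbf m;\mathbf n;s-a;q)$. The remaining constant is $-\Delta(\mathbf m;\mathbf n)$, which proves \eqref{eq:reciprocity}. I also need to check that $0\leq s-a\leq s$ and $1\leq r-b+1\leq r$, so that the right-hand side lies in the admissible range; this is immediate.

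\emph{The degree bound.} This is the only step needing care. Write $G(q)=F^{(r-b+1)}_{r,s}(\mathbf m;\mathbf n;s-a;q)$ and apply \eqref{eq:reciprocity} with $q$ replaced by $q^{-1}$. This gives $q^{\Delta}G(q^{-1})=F^{(b)}_{r,s}(\mathbf m;\mathbf n;a;q)$.

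Now I show that the right-hand side is regular at $q=0$, i.e.\ a formal power series in $q$. Each $q$-factorial in the prefactor has constant term $1$, so the prefactor is a power series. Each summand is a polynomial, because its exponent satisfies $ak^2+(2b-1)\bc{k}\geq0$ for all integers $k$ (here I use $a\geq0$, $b\geq1$ and $\bc{k}\geq0$).

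If $G$ is a polynomial, then $q^{\Delta}G(q^{-1})$ is a Laurent polynomial. Since it equals a power series, it has no negative powers of $q$. Hence every monomial $q^d$ of $G$ satisfies $\Delta-d\geq0$, that is, $\deg G\leq\Delta(\mathbf m;\mathbf n)$.

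I expect the main obstacle to be bookkeeping rather than ideas. The points to watch are the cyclic cancellation of the linear-in-$k$ terms, and the terms with $m_i+k$ outside $[0,m_i+m_{i+1}+1]$. Those terms vanish on both sides of \eqref{eq:reciprocity}, so the identity holds termwise for every $k$ in the summation range.
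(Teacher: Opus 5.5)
Your proof is correct. The derivation of the identity is the same as the paper's: you invert each factor with \eqref{eq:q-inversion}, let the cyclic sums cancel the terms linear in the $m_i$ and $n_j$, and use $rk^2-rk=2r\bc{k}$ to arrive at the weight $(s-a)k^2+(2r-2b+1)\bc{k}$.

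The degree bound is where your route differs from the paper's. The paper works directly with $G(q)=F^{(r-b+1)}_{r,s}(\mathbf m;\mathbf n;s-a;q)$. Using $\deg\,[N]!=\bc{N}$ and $\deg\qbinom{N}{K}=K(N-K)$, it computes the rational-function degree of each summand as $\Delta-ak^2-(2b-1)\bc{k}\leq\Delta$. It then concludes from $\deg(R_1+R_2)\leq\max(\deg R_1,\deg R_2)$, without using the identity.

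You instead push the question through the identity to $F^{(b)}_{r,s}(\mathbf m;\mathbf n;a;q)=q^{\Delta}G(q^{-1})$. You then observe that this is regular at $q=0$, which needs only that $q$-factorials have constant term $1$ and that Gaussian coefficients are polynomials. This is the same estimate viewed at $0$ rather than at $\infty$. Your version avoids computing degrees of the individual factors but depends on the identity just proved. The paper's version is independent of the identity and gives the exact degree of each summand.
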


\pf
Using \eqref{eq:q-inversion}, under $q\mapsto q^{-1}$ the prefactor in \eqref{eq:F-def} contributes
\[
q^{-\bc{m_1}-\bc{n_1}-\bc{m_r+n_s+1}+
\bc{m_1+m_r+1}+\bc{n_1+n_s}}.
\]
The $i$th $m$-factor contributes
\[
   q^{-(m_i+k)(m_{i+1}+1-k)},
\]
and the $j$th $n$-factor contributes
\[
   q^{-(n_j+k)(n_{j+1}-k)}.
\]
Using cyclicity, we have
\[
\sum_{i=1}^{r}(m_i+k)(m_{i+1}+1-k)
=\sum_{i=1}^{r}m_i(m_{i+1}+1)+rk-rk^2
\]
and
\[
\sum_{j=1}^{s}(n_j+k)(n_{j+1}-k)
=\sum_{j=1}^{s}n_jn_{j+1}-sk^2.
\]
Consequently the total exponent after inversion equals
\[
   -\Delta(\mathbf m;\mathbf n)+(s-a)k^2+(2r-2b+1)\bc{k},
\]
which is precisely the exponent in
$F_{r,s}^{(r-b+1)}(\mathbf m;\mathbf n;s-a;q)$ after extracting the factor $q^{-\Delta(\mathbf m;\mathbf n)}$.  This proves \eqref{eq:reciprocity}.

For the degree bound, we use the degree of a rational function in $q$.
For a non-zero rational function $R(q)$, write $\deg R$ for the difference
between the degrees of its numerator and denominator after cancellation.  The
prefactor in the $k$th summand of
$F_{r,s}^{(r-b+1)}(\mathbf m;\mathbf n;s-a;q)$ contributes the first line of
\eqref{eq:Delta-def}; the $m$- and $n$-products contribute
\[
   \sum_{i=1}^{r}(m_i+k)(m_{i+1}+1-k)
   +\sum_{j=1}^{s}(n_j+k)(n_{j+1}-k).
\]
After adding the exponent $(s-a)k^2+(2r-2b+1)\bc{k}$, the rational-function
degree of this summand is
\[
   \Delta(\mathbf m;\mathbf n)-a k^2-(2b-1)\bc{k}\leq \Delta(\mathbf m;\mathbf n),
\]
because $a\geq0$, $b\geq1$, and $\bc{k}\geq0$ for every integer $k$.
Hence every non-zero summand has rational-function degree at most
$\Delta(\mathbf m;\mathbf n)$.  If the whole sum is a polynomial, its ordinary
polynomial degree cannot exceed this common upper bound.\qed

By using the $q$-Chu--Vandermonde summation (see \cite[Appendix (II.7)]{GasperRahman}), Guo \cite{Guo} established the following product identity.
\begin{lem}[Guo]\label{lem:product-identity}
For $m_1,m_2\in\NN$ and every integer $k$,
\begin{align}\label{eq:product-qchu}
&\qbinom{m_1+m_2+1}{m_1+k}
\qbinom{m_1+m_2+1}{m_2+k}       \\
&\quad=
\sum_{t=0}^{m_1-k+1}q^{t^2+2kt-t}
\frac{\qpoch{m_1+m_2+1}}
{\qpoch{t}\qpoch{t+2k-1}\qpoch{m_1-k-t+1}\qpoch{m_2-k-t+1}}.
\nonumber
\end{align}
Here the convention $1/\qpoch{u}=0$ for $u<0$ is used.
\end{lem}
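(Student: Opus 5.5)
We reduce the identity to a single application of the $q$-Chu--Vandermonde summation by factoring the common piece $\qbinom{N}{m_1+k}$, where $N=m_1+m_2+1$, out of the right-hand side.

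\textbf{Setup.} First dispose of the degenerate values of $k$.
\begin{itemize}
\item If $k>m_2+1$, then $m_1+k>N$, so the left-hand side vanishes. On the right, $m_2-k-t+1<0$ for every $t\geq 0$, so each summand vanishes by the convention $1/\qpoch{u}=0$ for $u<0$.
\item If $k<-m_1$, the situation is symmetric: $m_2+k<0$ kills the left-hand side, and $m_1-k-t+1$ is never the index that saves a summand. One checks directly that every summand contains some $1/\qpoch{u}$ with $u<0$.
\end{itemize}
Hence we may assume $-m_1\leq k\leq m_2+1$. Then $A=m_1+k$ and $B=m_2+1-k$ are non-negative and $A+B=N$.

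\textbf{Rewriting each summand.} Set $C=m_1-k+1$. For a summand in which all four $q$-Pochhammer indices are non-negative, regroup as
\[
\frac{\qpoch{N}}{\qpoch{t}\qpoch{t+2k-1}\qpoch{m_1-k-t+1}\qpoch{m_2-k-t+1}}
=\frac{\qpoch{N}}{\qpoch{A}\qpoch{B}}\cdot
\frac{\qpoch{A}}{\qpoch{t+2k-1}\qpoch{C-t}}\cdot
\frac{\qpoch{B}}{\qpoch{t}\qpoch{B-t}}.
\]
Since $(t+2k-1)+(C-t)=A$, the three factors are Gaussian coefficients:
\[
\qbinom{N}{m_1+k}\,\qbinom{A}{C-t}\,\qbinom{B}{t}.
\]
The same expression is correct, namely zero, when some index is negative, because the Gaussian coefficient \eqref{eq:qbinom-def} vanishes outside $0\leq K\leq N$.

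Dividing both sides by $\qbinom{N}{m_1+k}$ (for $k$ with this factor non-zero; otherwise both sides are zero), the claim becomes
\[
\qbinom{N}{m_2+k}=\sum_{t}q^{t(t+2k-1)}\qbinom{A}{C-t}\qbinom{B}{t}.
\]

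\textbf{Applying $q$-Chu--Vandermonde.} Note that $A-C=2k-1$, so the exponent is $t(A-C+t)$. This is exactly the $q$-Chu--Vandermonde convolution
\[
\sum_t q^{t(A-C+t)}\qbinom{A}{C-t}\qbinom{B}{t}=\qbinom{A+B}{C},
\]
which follows from \cite[Appendix (II.7)]{GasperRahman}, or equivalently from comparing coefficients in $(-z;q)_{A+B}=(-z;q)_A(-zq^A;q)_B$. It gives $\qbinom{N}{m_1-k+1}$, and by symmetry of Gaussian coefficients this equals $\qbinom{N}{N-m_1+k-1}=\qbinom{N}{m_2+k}$.

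\textbf{Range of summation.} The summation limits match $0\leq t\leq m_1-k+1=C$, because terms outside this range vanish automatically. I expect the only real obstacle to be this bookkeeping: making sure that the boundary cases ($t+2k-1<0$, $C-t<0$, $B-t<0$) are consistently zero on both sides under the stated conventions. The algebraic core is the single Vandermonde step above.
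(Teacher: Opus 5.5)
Your proof is correct and follows the route the paper indicates: the paper does not prove this lemma itself but cites Guo's derivation via the $q$-Chu--Vandermonde summation, and your step of factoring out $\qbinom{N}{m_1+k}$ and evaluating $\sum_t q^{t(A-C+t)}\qbinom{A}{C-t}\qbinom{B}{t}=\qbinom{N}{C}=\qbinom{N}{m_2+k}$ is exactly that single application. One small slip in the degenerate case $k<-m_1$: it is $m_1+k<0$, not $m_2+k<0$, that kills the left-hand side, and the right-hand side vanishes because $t+2k-1\geq 0$ and $m_1-k-t+1\geq 0$ together would force $1-2k\leq t\leq m_1-k+1$, which is impossible when $k<-m_1$.
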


We also need a deletion recurrence for $F_{r,s}^{(b)}(\mathbf m;\mathbf n;a;q)$.
\begin{lem}[Deletion recurrence]\label{lem:deletion}
Let $r\geq3$ and $2\leq b\leq r$.  Then
\begin{align}\label{eq:deletion}
F_{r,s}^{(b)}(m_1,m_2,\ldots,m_r;\mathbf n;a;q)
&=\sum_{\ell=0}^{m_1}q^{\ell^2+\ell}
\qbinom{m_1}{\ell}\qbinom{m_2+m_3+1}{m_2-\ell} \\
&\quad\times
F_{r-1,s}^{(b-1)}(\ell,m_3,\ldots,m_r;\mathbf n;a;q).
\nonumber
\end{align}
\end{lem}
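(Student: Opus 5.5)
The plan is to prove \eqref{eq:deletion} termwise in the summation index $k$ of \eqref{eq:F-def}. Both sides carry the same $n$-products, the same factor $(-1)^k q^{ak^2}$, and the same $m$-factors $\qbinom{m_i+m_{i+1}+1}{m_i+k}$ for $3\leq i\leq r-1$. The outer sums can be interchanged because all sums are finite. The exponent $(2b-1)\bc{k}$ on the left and $(2b-3)\bc{k}$ on the right differ by $q^{2\bc{k}}=q^{k^2-k}$. The hypothesis $b\geq2$ is used only to make $F_{r-1,s}^{(b-1)}$ meaningful, and $r\geq3$ ensures that the deleted vector $(\ell,m_3,\ldots,m_r)$ has length at least $2$. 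So it suffices to prove, for every integer $k$, the following identity between the leftover factors:
\begin{align*}
&q^{k^2-k}\frac{[m_1]!}{[m_1+m_r+1]!}\qbinom{m_1+m_2+1}{m_1+k}\qbinom{m_2+m_3+1}{m_2+k}\qbinom{m_r+m_1+1}{m_r+k}\\
&\quad=\sum_{\ell=0}^{m_1}q^{\ell^2+\ell}\qbinom{m_1}{\ell}\qbinom{m_2+m_3+1}{m_2-\ell}\frac{[\ell]!}{[\ell+m_r+1]!}\qbinom{\ell+m_3+1}{\ell+k}\qbinom{m_r+\ell+1}{m_r+k}.
\end{align*}

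Next I simplify factorials. On the left,
\[
\frac{[m_1]!}{[m_1+m_r+1]!}\qbinom{m_r+m_1+1}{m_r+k}=\frac{[m_1]!}{[m_r+k]!\,[m_1-k+1]!}.
\]
On the right, the analogous product equals $[\ell]!/([m_r+k]![\ell-k+1]!)$. Combined with $\qbinom{m_1}{\ell}$, this gives $[m_1]!/([m_r+k]![\ell-k+1]![m_1-\ell]!)$. Thus $[m_1]!/[m_r+k]!$ cancels, and $m_r$ disappears entirely.

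Then I expand into factorials:
\[
\qbinom{m_2+m_3+1}{m_2-\ell}\qbinom{\ell+m_3+1}{\ell+k}
=\qbinom{m_2+m_3+1}{m_2+k}\qbinom{m_2+k}{m_2-\ell}.
\]
So the factor $\qbinom{m_2+m_3+1}{m_2+k}$ also cancels, and $m_3$ disappears. After multiplying by $[m_1-k+1]!$, the claim reduces to the two-parameter identity
\[
q^{k^2-k}\qbinom{m_1+m_2+1}{m_1+k}=\sum_{\ell}q^{\ell^2+\ell}\qbinom{m_1-k+1}{m_1-\ell}\qbinom{m_2+k}{\ell+k}.
\]
The vanishing convention \eqref{eq:qbinom-def} handles boundary cases automatically. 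These include $k\leq0$, where some $\ell+k<0$, and $\ell<k-1$. The same convention lets the range $0\leq\ell\leq m_1$ be extended freely.

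The last step is to recognize this as the $q$-Chu--Vandermonde summation \cite[Appendix (II.7)]{GasperRahman}. I would use it in the form
\[
\qbinom{A+B}{N}=\sum_j q^{(A-j)(N-j)}\qbinom{A}{j}\qbinom{B}{N-j}.
\]
Substitute $j=\ell-k+1$ and use $\qbinom{m_1-k+1}{m_1-\ell}=\qbinom{m_1-k+1}{j}$ and $\qbinom{m_2+k}{\ell+k}=\qbinom{m_2+k}{m_2+1-j}$. Then $A=m_1-k+1$, $B=m_2+k$, $N=m_2+1$, and $A+B=m_1+m_2+1$, with $\qbinom{m_1+m_2+1}{m_2+1}=\qbinom{m_1+m_2+1}{m_1+k}$ as required.

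The main obstacle is matching the $q$-exponent. One must choose among the equivalent forms of Chu--Vandermonde, including the one obtained by swapping $A$ and $B$, with exponent $j(B-N+j)$. The chosen form must produce $\ell^2+\ell$ after subtracting $k^2-k$, for all $k$ including negative ones. If no single form matches directly, I would apply the $q\mapsto q^{-1}$ rule \eqref{eq:q-inversion} to one version to convert between the two exponent forms.
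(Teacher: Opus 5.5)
Your reduction of the lemma, termwise in $k$, to the two-parameter identity
\[
q^{k^2-k}\qbinom{m_1+m_2+1}{m_1+k}=\sum_{\ell}q^{\ell^2+\ell}\qbinom{m_1-k+1}{m_1-\ell}\qbinom{m_2+k}{\ell+k}
\]
is correct. It is essentially the paper's argument in a different order: multiplying this identity by $q^{-(k^2-k)}\qbinom{m_1+m_2+1}{m_2+k}$ gives exactly Guo's product identity (Lemma~\ref{lem:product-identity}), and your $j=\ell-k+1$ is the paper's $t$.

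The real problem is the final identification with $q$-Chu--Vandermonde, which is also the step you left unchecked. With $j=\ell-k+1$ one has $m_2+1-j=m_2+k-\ell$. So $\qbinom{m_2+k}{m_2+1-j}=\qbinom{m_2+k}{\ell}$, not $\qbinom{m_2+k}{\ell+k}$. Likewise $\qbinom{m_1+m_2+1}{m_2+1}=\qbinom{m_1+m_2+1}{m_1}$, which is not $\qbinom{m_1+m_2+1}{m_1+k}$ when $k\neq0$. With $N=m_2+1$ you are evaluating a different convolution, so no choice of exponent form, and no $q\mapsto q^{-1}$ step, can give your identity.

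The correct parameters are $A=m_1-k+1$, $B=m_2+k$, $N=m_2-k+1$, $j=\ell-k+1$. Then $\qbinom{A}{j}=\qbinom{m_1-k+1}{m_1-\ell}$, $\qbinom{B}{N-j}=\qbinom{m_2+k}{m_2-\ell}=\qbinom{m_2+k}{\ell+k}$, and $\qbinom{A+B}{N}=\qbinom{m_1+m_2+1}{m_1+k}$. Use the form $\qbinom{A+B}{N}=\sum_j q^{j(B-N+j)}\qbinom{A}{j}\qbinom{B}{N-j}$. Here $B-N+j=\ell+k$, so the exponent is $(\ell-k+1)(\ell+k)=\ell^2+\ell-(k^2-k)$, exactly as required. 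The other form, with exponent $(A-j)(N-j)=(m_1-\ell)(m_2-\ell)$, does not match.

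You should also do the boundary bookkeeping explicitly rather than citing the convention wholesale:
\begin{itemize}
\item For $k>m_1+1$, both sides of your termwise identity vanish, through $\qbinom{m_r+m_1+1}{m_r+k}$ on the left and $\qbinom{m_r+\ell+1}{m_r+k}$ on the right (since $\ell\leq m_1$).
\item For $k<-m_2$, the left vanishes through $\qbinom{m_2+m_3+1}{m_2+k}$. On the right, $\qbinom{\ell+m_3+1}{\ell+k}=0$ when $\ell\leq m_2$, and $\qbinom{m_2+m_3+1}{m_2-\ell}=0$ when $\ell>m_2$.
\item In the remaining range $A,B\geq0$. So Chu--Vandermonde holds there as a polynomial identity, negative $k$ included, and $[m_1-k+1]!$ is a genuine factorial.
\end{itemize}
With these corrections the proof goes through.
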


\pf
Let
\[
C(\mathbf m;\mathbf n;k)=
\prod_{i=1}^{r}\qbinom{m_i+m_{i+1}+1}{m_i+k}
\prod_{j=1}^{s}\qbinom{n_j+n_{j+1}}{n_j+k}.
\]
The two factors involving $m_1$ and $m_2$ may be separated as
\begin{align}\label{eq:C-separation}
C(m_1,\ldots,m_r;\mathbf n;k)
&=\frac{\qpoch{m_2+m_3+1}\qpoch{m_r+m_1+1}}
{\qpoch{m_1+m_2+1}\qpoch{m_r+m_3+1}}  \\
&\quad\times
\qbinom{m_1+m_2+1}{m_1+k}
\qbinom{m_1+m_2+1}{m_2+k}
C(m_3,\ldots,m_r;\mathbf n;k),\nonumber
\end{align}
where $C(m_3,\ldots,m_r;\mathbf n;k)$ denotes the corresponding product with the cyclic $m$-vector $(m_3,\ldots,m_r)$.

Apply Lemma~\ref{lem:product-identity} to the two Gaussian coefficients in
\eqref{eq:C-separation}.  Substitute \eqref{eq:C-separation} and
\eqref{eq:product-qchu} into \eqref{eq:F-def}, and set
\[
   \ell=t+k-1.
\]
By the convention $1/\qpoch{u}=0$ for $u<0$, all terms with inadmissible indices vanish.  Since $\ell=t+k-1$, the remaining outer index satisfies $0\leq\ell\leq m_1$ after the order of summation is changed.  The exponent separates because
\begin{equation}\label{eq:exponent-separation}
   t^2+2kt-t+a k^2+(2b-1)\bc{k}
   =\ell^2+\ell+a k^2+(2b-3)\bc{k}.
\end{equation}

The remaining $k$-sum is converted into the defining sum for the smaller
alternating sum by the following comparison of the cyclic products:
\begin{align}\label{eq:C-recombine}
C(m_3,\ldots,m_r;\mathbf n;k)
&=\frac{\qpoch{\ell-k+1}\qpoch{\ell+k}\qpoch{m_r+m_3+1}}
{\qpoch{m_3+\ell+1}\qpoch{m_r+\ell+1}} \\
&\quad\times
C(\ell,m_3,\ldots,m_r;\mathbf n;k).\nonumber
\end{align}
Indeed, the two cyclic products have the same $n$-factors and the same internal
$m$-factors from $m_3$ through $m_r$.  The only difference is that
$C(\ell,m_3,\ldots,m_r;\mathbf n;k)$ contains the two boundary factors
\[
   \qbinom{\ell+m_3+1}{\ell+k}
   \quad\hbox{and}\quad
   \qbinom{m_r+\ell+1}{m_r+k},
\]
whereas $C(m_3,\ldots,m_r;\mathbf n;k)$ contains instead the closing factor
\[
   \qbinom{m_r+m_3+1}{m_r+k}.
\]
Expanding these three Gaussian coefficients in terms of $\qpoch{\cdot}$ gives
exactly \eqref{eq:C-recombine}.  Combining \eqref{eq:C-recombine} with the
prefactor in \eqref{eq:F-def} leaves the prefactor of the smaller expression
\[
   F_{r-1,s}^{(b-1)}(\ell,m_3,\ldots,m_r;\mathbf n;a;q).
\]
The factors that do not enter this smaller $F$ are
\[
   q^{\ell^2+\ell}\qbinom{m_1}{\ell}\qbinom{m_2+m_3+1}{m_2-\ell}.
\]
Thus \eqref{eq:deletion} follows.\qed

\noindent{\it Proof of Theorem~\ref{thm:Guo71}.}
It is convenient to prove the slightly stronger statement in which the $m_i$ are allowed to be non-negative, while the $n_j$ remain positive.  The stated theorem is the positive-parameter case.

We prove this stronger statement simultaneously for all admissible $r$ by
induction on $b$; equivalently, the induction assertion for a fixed $b$ is made
for every $r\geq b$.  For $b=1$, Lemma~\ref{lem:reciprocity} gives
\[
   F_{r,s}^{(1)}(\mathbf m;\mathbf n;a;q)
   =q^{\Delta(\mathbf m;\mathbf n)}
     F_{r,s}^{(r)}(\mathbf m;\mathbf n;s-a;q^{-1}).
\]
Since $0\leq s-a\leq s$, Guo's positivity theorem \eqref{eq:Guo-theorem-used} says that
$F_{r,s}^{(r)}(\mathbf m;\mathbf n;s-a;q)\in\NNq$.  Write
$F_{r,s}^{(r)}(\mathbf m;\mathbf n;s-a;q)=\sum_{i=0}^{d}c_iq^i$ with
$c_i\in\NN$ and $d\leq\Delta(\mathbf m;\mathbf n)$ by Lemma~\ref{lem:reciprocity}.  Then
\[
q^{\Delta(\mathbf m;\mathbf n)}
F_{r,s}^{(r)}(\mathbf m;\mathbf n;s-a;q^{-1})
=\sum_{i=0}^{d}c_iq^{\Delta(\mathbf m;\mathbf n)-i}\in\NNq .
\]
Hence the case $b=1$ is positive.

Assume now that $b\geq2$ and that the assertion is known for $b-1$.  If $r=2$, then necessarily $b=2$, and the desired assertion is exactly Guo's theorem \eqref{eq:Guo-theorem-used}.  If $r\geq3$, Lemma~\ref{lem:deletion} expresses $F_{r,s}^{(b)}$ as a finite sum whose coefficients
\[
   q^{\ell^2+\ell}\qbinom{m_1}{\ell}\qbinom{m_2+m_3+1}{m_2-\ell}
\]
belong to $\NNq$, multiplied by
\[
   F_{r-1,s}^{(b-1)}(\ell,m_3,\ldots,m_r;\mathbf n;a;q),
\]
which belongs to $\NNq$ by the induction hypothesis.  Therefore every term in \eqref{eq:deletion} is positive, and the induction step is proved.  This completes the proof.\qed

\section{Concluding remarks}\label{sec:concluding}

Theorem~\ref{thm:odd-Catalan} supplies the missing companion recurrence needed to prove the positivity of Guo's odd $q$-super Catalan numbers.  Guo had already obtained the recurrence for $C_{N+h,N}(q)$; Lemma~\ref{lem:backward-recurrence} gives the corresponding recurrence for $C_{N,N+h}(q)$, so that a strong induction on $m+n$ closes the proof.

Theorem~\ref{thm:Guo71} shows that Guo's alternating-sum positivity theorem is stable under replacing the coefficient $2r-1$ of $\bc{k}$ by any odd coefficient $2b-1$ with $1\leq b\leq r$.  The proof is structural: the reciprocity lemma supplies the endpoint $b=1$ from Guo's known endpoint $b=r$, and the deletion recurrence transfers positivity from $b-1$ to $b$.

It remains natural to ask whether analogous reciprocity and deletion mechanisms can be adapted to Guo's other conjectures on alternating sums involving $q$-Narayana-type factors.  The present arguments isolate two robust sources of positivity, but they do not by themselves settle those further conjectures.

\end{document}